\newcommand\R{{\mathbf{R}}}
\newcommand\M{{\operatorname{M}}}
\newcommand\E{{\operatorname{E}}}
\renewcommand\Im{{\operatorname{Im}}}
\newcommand\eps{{\varepsilon}}
\newcommand\dist{{\operatorname{dist}}}
\newcommand{\nabb}{\mbox{$\nabla \mkern-13mu /$\,}}
\theoremstyle{plain}
  \newtheorem{theorem}[subsection]{Theorem}
  \newtheorem{proposition}[subsection]{Proposition}
  \newtheorem{lemma}[subsection]{Lemma}
\theoremstyle{remark}
  \newtheorem{remark}[subsection]{Remark}
\theoremstyle{definition}
\begin{document}

\title[A global compact attractor for NLS with potential]{A global compact attractor for high-dimensional defocusing non-linear Schr\"odinger equations with potential}
\author{Terence Tao}
\address{Department of Mathematics, UCLA, Los Angeles CA 90095-1555}
\email{tao@@math.ucla.edu}
\subjclass{35Q55}

\vspace{-0.3in}
\begin{abstract}
We study the asymptotic behavior of large data solutions in the energy space $H := H^1(\R^d)$ in very high dimension $d \geq 11$ to defocusing Schr\"odinger equations $i u_t + \Delta u = |u|^{p-1} u + Vu$ in $\R^d$, where $V \in C^\infty_0(\R^d)$ is a real potential  (which could contain bound states), and $1+\frac{4}{d} < p < 1+\frac{4}{d-2}$ is an exponent which is energy-subcritical and mass-supercritical.  In the spherically symmetric case, we show that as $t \to +\infty$, these solutions split into a radiation term that evolves according to the linear Schr\"odinger equation, and a remainder which converges in $H$ to a compact attractor $K$, which consists of the union of spherically symmetric almost periodic orbits of the NLS flow in $H$.  The main novelty of this result is that $K$ is a \emph{global} attractor, being independent of the initial energy of the initial data; in particular, no matter how large the initial data is, all but a bounded amount of energy is radiated away in the limit.
\end{abstract}

\maketitle

\section{Introduction}

The purpose of this paper is to establish some asymptotic properties of bounded-energy solutions of non-linear Schr\"odinger (NLS) equations 
\begin{equation}\label{nls}
i u_t + \Delta u = |u|^{p-1} u + Vu
\end{equation}
with high dimension $d \geq 5$, where the potential $V \in C^\infty_0(\R^d)$ is real, and where the defocusing nonlinearity $|u|^{p-1} u$ is energy-subcritical and mass-supercritical, which means that $1 + \frac{4}{d} < p < 1 + \frac{4}{d-2}$.  No size, positivity, or spectral assumptions will be made on the potential $V$, which may thus be arbitrarily large, negative, and contain many bound states.  However, as is well known, the high dimension $d \geq 5$ does automatically exclude any resonances for the linear Schr\"odinger operator $-\Delta + V$ at the origin; see \cite{Jensen}.  For technical reasons we will unfortunately be forced to place further assumptions on the dimension, and ultimately our main results will only hold for $d \geq 11$, though one can improve this bound somewhat with more effort (see Section \ref{remarks-sec}).

The equation \eqref{nls} enjoys two conserved quantities, the \emph{mass}
\begin{equation}\label{mass-def}
\M(u) = \M(u(t)) := \int_{\R^d} |u(t,x)|^2\ dx
\end{equation}
and the \emph{energy}
\begin{equation}\label{energy-def}
\E(u) = \E(u(t)) := \int_{\R^d} \frac{1}{2} |\nabla u(t,x)|^2 + V(x) \frac{1}{2} |u(t,x)|^2 + \frac{1}{p+1} |u(t,x)|^{p+1}\ dx.
\end{equation}
It is known (see e.g. \cite{cazbooknew}, \cite{katounique}, \cite{tao:book}) that for any initial data $u_0$ in the energy space\footnote{This is the Hilbert space with inner product $\langle u, v \rangle_H := \int_{\R^d} u \overline{v} + \nabla u \cdot \overline{\nabla v}\ dx$.} $H := H^1_x(\R^d)$, there exists a unique local strong solution $u \in C^0_t H^1_x([-T,T] \times \R^d)$ to \eqref{nls} with that data which conserves both the mass and energy, where we recall that a \emph{strong solution} is a solution in $C^0_t H^1_x$ for which the Duhamel formula
$$ u(t) = e^{it\Delta} u_0 - i \int_0^t e^{i(t-t')\Delta} (|u|^{p-1} u + Vu)(t')\ dt'$$
is valid in the sense of distributions.  Furthermore, the time $T$ of existence depends only on the dimension $d$, the potential $V$, and the $H^1$ norm of $u_0$.  From the conservation laws and the Gagliardo-Nirenberg inequality we obtain the \emph{a priori} bound
$$ \| u(t) \|_{H} \lesssim_{p,d,V,\|u_0\|_{H}} 1,$$
where we use $X \lesssim_k Y$ or $X = O_k(Y)$ to denote the estimate $X \leq C(k) Y$ for some constant $C$ depending only on $k$, and similarly for other choices of subscripts.  One can then easily iterate the local existence theory and conclude that there is a unique \emph{global} strong solution $u \in C^0_t H^1_x(\R \times \R^d)$ to \eqref{nls} from any initial data $u_0 \in H^1_x(\R^d)$.

Now we consider the long-term behaviour of such global solutions as $t \to \pm \infty$.  By the time reversal symmetry $u(t,x) \mapsto \overline{u(-t,x)}$ it suffices to consider the limit $t \to +\infty$.  A major tool for this task is provided by the \emph{generalised virial identity}
\begin{equation}\label{virial}
\begin{split}
\partial_t \int_{\R^d} \nabla a \cdot \Im( \overline{u} \nabla u )\ dx
& = 2 \int_{\R^d} \operatorname{Hess}(a)( \nabla u, \overline{\nabla u} )\ dx\\
&\quad + \frac{p-1}{p+1} \int_{\R^d} |u|^{p+1} \Delta a\ dx \\
&\quad - \frac{1}{2} \int_{\R^d} |u|^2 \Delta \Delta a\ dx \\
&\quad - \int_{\R^d} (\nabla a \cdot \nabla V) |u|^2\ dx
\end{split}
\end{equation}
for any test function $a \in C^\infty_0(\R^d)$, where $\operatorname{Hess}(a)$ is the Hessian quadratic form
$$ \operatorname{Hess}(a)(v,w) := \sum_{i=1}^d \sum_{j=1}^d \frac{\partial^2 a}{\partial x_i x_j} v_i w_j.$$
This identity can be easily verified by formal computation, and can be justified rigorously by regularising the nonlinearity $|u|^{p-1} u$ and the initial data $u_0$; we omit the standard details.  One can also extend the identity to more general classes of weights $a$ assuming sufficient regularity and decay conditions on $u$; see Section \ref{virial-sec}.

Suppose that we are in the free case $V=0$.  Formally applying \eqref{virial} with $a(x) := |x|$ and then integrating in time, we obtain the \emph{Morawetz inequality}
$$ \int_0^\infty \int_{\R^d} \frac{|\nabb u|^2}{|x|} + \frac{|u|^{p+1}}{|x|} + \frac{|u|^2}{|x|^3}\ dx dt \lesssim_{p,d,\|u_0\|_{H}} 1$$
where $|\nabb u|^2 := |\nabla u|^2 - |\frac{x}{|x|} \cdot \nabla u|^2$ is the angular component of the energy density $|\nabla u|^2$.  This Morawetz inequality can be used to justify a \emph{scattering} result (or more precisely, an \emph{asymptotic completeness} result): given any $u_0 \in H$, there exists a unique \emph{scattering state} $u_+ \in H$ such that $\| u(t) - e^{it\Delta} u_+ \|_{H} \to 0$ as $t \to +\infty$; see \cite{gv:scatter}.  In other words, we have an asymptotic $u(t) = e^{it\Delta} u_+ + o_H(1)$, where $o_H(1)$ denotes a function which goes to zero in $H$ norm as $t \to \infty$.

Similar arguments can be made when $V$ is radially decreasing or is sufficiently small.  However, when the negative component of $V$ is large, the linear Schr\"odinger $-\Delta + V$ can admit bound states, which then implies the existence of nonlinear bound states $Q$ that solve the equation $-EQ + \Delta Q = |Q|^{p-1} Q + V Q$ for some fixed energy $E \in \R$; see \cite{rose} for further discussion.  This leads to solutions $u(t,x) := e^{iEt} Q(x)$ to \eqref{nls} which do not converge to a free solution $e^{it\Delta} u_+$, and so asymptotic completeness in the formulation given above fails for such potentials.

However, one may hope that such nonlinear bound states are the \emph{only} obstruction to asymptotic completeness for the equation \eqref{nls}, and more specifically that any global solution $u \in C^0_t H^1_x([0,+\infty) \times \R^d)$ should asymptotically take the form $u(t) = e^{it\Delta} u_+ + e^{iEt} Q + o_H(1)$ for some nonlinear bound state $Q$.  Such an assertion would be consistent with the (somewhat imprecise) \emph{soliton resolution conjecture}; see \cite{soffer-icm}, \cite{tao:compact}, \cite{attractor} for further discussion.

Such a precise asymptotic appears to be well out of reach of current technology at present\footnote{An exception to this occurs when the nonlinearity is restricted to only a finite number of points in space, thus reducing the system to a linear dispersive PDE coupled with a nonlinear ODE; see \cite{komech}.}.  However we are able to present the following partial result under the additional assumption of spherical symmetry (i.e. $u(t,x) = u(t,|x|)$) and very high dimension $d \geq 11$, which is the main result of this paper:

\begin{theorem}[Global compact attractor]\label{main-thm} Fix $p, d, V$ as above.  Suppose also that $d \ge 11$. Then there exists a compact set $K \subset H$ of spherically symmetric functions, invariant under the flow \eqref{nls}, such that for every global solution $u \in C^0_t H^1_x(\R \times \R^d)$ which is spherically symmetric, there exists a spherically symmetric scattering state $u_+ \in H$ such that
$$ \dist_H( u(t) - e^{it\Delta} u_+, K ) \to 0 \hbox{ as } t \to +\infty.$$
\end{theorem}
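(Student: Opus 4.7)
The plan is to decompose each spherically symmetric global solution $u(t)$ as $u(t) = e^{it\Delta} u_+ + w(t)$, where $u_+$ is a scattering state absorbing the purely linear radiation, and then to prove that $w(t)$ approaches a fixed compact set $K \subset H$ of bounded-energy spherically symmetric almost periodic NLS orbits. Three ingredients will be combined: an energy-independent outgoing-flux estimate derived from the virial identity \eqref{virial}, a profile decomposition applied along arbitrary sequences $t_n \to +\infty$, and an abstract construction of $K$ as the $H$-closure of the union of all bounded-energy spherically symmetric almost periodic orbits of \eqref{nls}.

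First I would derive the key Morawetz-type bound by applying \eqref{virial} with a carefully chosen radial weight $a=a(|x|)$ that behaves like $|x|$ for large $|x|$ but is modified on the support of $V$ so as to cancel or dominate the $\nabla a \cdot \nabla V$ contribution. In the spherically symmetric setting and in sufficiently high dimension, the $\operatorname{Hess}(a)$ and $\Delta a$ terms provide coercive control of the angular and nonlinear energies, while the biharmonic error $\tfrac{1}{2} |u|^2 \Delta\Delta a$ is absorbed via weighted Strichartz or Hardy-type estimates. The central point, and the main departure from the standard Morawetz argument sketched in the introduction, is to arrange this estimate so that the resulting bound on the flux of nonlinear energy escaping any fixed large ball depends only on $(p,d,V)$, not on $\|u_0\|_H$; this uniformity is made possible by the simultaneously defocusing and mass-supercritical character of the nonlinearity, which penalises concentration of more than a universal amount of energy. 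The condition $d \geq 11$ presumably enters by making the biharmonic and Hardy error terms summable at the relevant exponent.

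Given this uniform flux bound, for any time sequence $t_n \to +\infty$ I would apply a profile decomposition to $u(t_n)$ in $H$, extracting orthogonal profiles together with a small linear remainder. Profiles that escape to spatial infinity evolve essentially freely (since $V$ is compactly supported) and can be absorbed into the scattering state $u_+$; the remaining bounded, non-escaping profiles must be almost periodic under the nonlinear flow, for otherwise one could iterate the extraction and pull off additional linear radiation, contradicting the minimality of the decomposition. The energy-independent flux bound forces the total energy captured by non-escaping profiles to be bounded by a universal constant, placing all such orbits inside a fixed energy shell. Defining $K$ as the $H$-closure of the union of all spherically symmetric almost periodic orbits of \eqref{nls} with energy below this universal threshold produces a flow-invariant set, whose compactness is verified by a diagonal profile-decomposition argument exploiting the uniform energy bound.

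The main obstacle I anticipate is the first step: the standard virial calculation with $a(x)=|x|$ yields a Morawetz bound whose right-hand side scales with $\|u_0\|_H$, which would only produce a local attractor whose size depends on the initial energy. To obtain a genuinely global attractor one must find a weight $a$ for which the coercive contributions of $\operatorname{Hess}(a)$ and $\Delta a$ uniformly dominate both the biharmonic error $|u|^2 \Delta\Delta a$ and the potential term $(\nabla a \cdot \nabla V)|u|^2$, while simultaneously exposing an outgoing-flux functional bounded independently of $\|u_0\|_H$. Making this work in a form robust enough to feed back into the profile decomposition is, I expect, the technically hardest step, and is almost certainly what forces the restriction $d \geq 11$.
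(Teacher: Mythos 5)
Your proposal correctly identifies the central difficulty, but then treats it as a problem that a clever choice of Morawetz weight could overcome, and this is where the argument breaks down. The right-hand side of \eqref{virial2} is controlled by $\sup_t \int |u|\,|\nabla u|\,|\nabla a|\,dx$, and no modification of a weight $a$ growing like $|x|$ near infinity eliminates the dependence of that bound on $\|u_0\|_H$: the virial boundary term always scales with the energy for a general finite-energy solution. There is no ``universal flux bound'' at this level of generality, and the claim that mass-supercriticality alone ``penalises concentration of more than a universal amount of energy'' is not something that a single Morawetz inequality with linear weight can deliver.

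What the paper actually does is quite different at the crucial step. It first uses the existing local compact attractor (Theorem \ref{local-thm}) to reduce the whole problem to a quasi-Liouville theorem about \emph{almost periodic} solutions, so that the virial machinery only ever needs to be applied to orbits already known to be precompact. For such solutions it establishes polynomial spatial decay, $\int_{|x|\geq R} |u|^2\,dx \lesssim_E R^{4-d}$, by a double Duhamel argument combined with a spherically symmetric dispersive estimate (Proposition \ref{decay-prop}). This decay is what makes it legitimate to apply the virial identity with the \emph{superlinear} weights $a=|x|^2$ and $a=|x|^4$: the right-hand side $\sup_t \int |u|\,|\nabla u|\,|\nabla a|\,dx$ is then finite (though still $E$-dependent), while the left-hand side is coercive and grows linearly in $T_2-T_1$. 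Dividing by $T_2-T_1$ and letting it tend to infinity kills the $E$-dependence and yields first $\E(u)\lesssim 1$ (via $|x|^2$) and then, after using Hardy's inequality to absorb the negative biharmonic term, $\M(u)\lesssim 1$ (via $|x|^4$). The constraint $d\geq 11$ comes from needing $\int |u|^2 |x|^6\,dx$ finite to justify the quartic weight, not from a biharmonic summability issue for a near-linear weight.

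So the missing ideas in your sketch are: (i) the reduction to almost periodic solutions \emph{before} running any virial argument, rather than trying to prove a global-in-data Morawetz bound; (ii) the polynomial spatial decay for such solutions via the double Duhamel / in-out decomposition, which is what licenses rapidly growing weights; and (iii) the two-tier virial scheme (quadratic weight for energy, quartic weight plus Hardy for mass) whose $E$-dependence is eliminated by time-averaging rather than by weight design. The profile-decomposition and closure-of-orbits scaffolding you describe is broadly consonant with the structure the paper borrows from \cite{attractor}, but it cannot carry the argument without the universal mass and energy bounds, and those come from the route above rather than from any flux estimate with $a\approx |x|$.
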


Thus we have a resolution of the form $u(t) = e^{it\Delta} u_+ + w(t) + o_H(1)$, where $w(t)$ ranges inside a universal compact subset of $H$.  The condition $d \geq 11$ can be relaxed; see Section \ref{remarks-sec}.

The arguments in \cite{attractor} already yield a weaker version of this theorem, in which the solution $u$ is assumed to have bounded energy (and the compact set $K$ is then allowed to depend on this energy bound); we review these arguments in Section \ref{quasisec}.  The main novelty in this paper, therefore, is the fact that $K$ is now independent of the energy of the initial data; in other words, $K$ is a truly \emph{global} attractor for the evolution (once one subtracts away the radiation term, of course).  In particular, we see that $\limsup_{t \to +\infty} \| u(t) - e^{it\Delta} u_+ \|_H$ remains bounded even as the energy of $u$ goes to infinity; to put it another way, every finite energy solution, no matter how large, radiates all but a bounded amount of its energy to infinity.

Theorem \ref{main-thm} raises the possibility that the soliton resolution conjecture could in principle be establishable for \eqref{nls} for specific choices of $p, d, V$ by means of rigorous numerics, combined with a quantitative nonlinear stability analysis of each of the nonlinear bound states.  Indeed, suppose one knew that each nonlinear bound state was orbitally stable (cf. \cite{rose}), even in the presence\footnote{Note that in some models, radiation is known to introduce instabilities in an otherwise stable system; see \cite{radiation}.} of a radiation term $e^{it\Delta} u_+$, provided that the time $t$ was sufficiently large (in order to allow the radiation term to decay) and that the remaining portion of the solution lay within $\eps$ (say) of the nonlinear bound state in $H$ norm.  Suppose also the attractor $K$ was known to be contained in some other compact set $K'$, and that one could show by rigorous numerics (using quantitative perturbative analysis to control errors) that any initial data in $K'$ would eventually end up either exiting $K'$, or coming within $\eps$ of a nonlinear bound state, after a bounded amount of time.  Then one could conclude that any finite energy solution would eventually decouple into a radiation term plus a term which always stayed within a small distance of a set of nonlinear bound states.  If one then had some asymptotic stability results for such bound states one could thus (in principle) establish the soliton resolution conjecture for this model \eqref{nls}; examples of such stability results (in the small energy regime) can be found in \cite{sw}, \cite{sw2}, \cite{wayne}, \cite{weder}.  Note however that such stability results are only likely to be true for the ground nonlinear bound states; excited bound states are likely to decay into bound states of lower energy; see \cite{sw3} for an instance of this.  In principle, though, a modified stability analysis of such excited states as in \cite{sw3} could still suffice, in conjunction with rigorous numerics, to establish the soliton resolution conjecture for any given model of the form \eqref{nls}.

Our argument proceeds as follows.  First, we adapt the arguments in \cite{attractor} to obtain a preliminary compactness result which allows one to reduce matters to establishing the result for \emph{almost periodic} spherically symmetric solutions, i.e. spherically symmetric solutions whose orbit $\{ u(t): t \in [0,+\infty)\}$ is precompact in $H$.  For such solutions we establish additional spatial decay properties, using the double Duhamel trick from \cite{attractor}.  The decay properties will allow us to utilise the generalised virial identity \eqref{virial} for functions $a$ which grow rather fast at infinity to yield universal bounds on the mass and energy of such solutions.  More precisely, we use the classical virial weight $a(x) := |x|^2$ to control the energy, and the quartic weight $a(x) := |x|^4$ to control the mass.  In order to use the latter weight one requires quite strong spatial decay on the almost periodic solution, which is why our results are restricted to high dimension $d \geq 11$.  Invoking the arguments from \cite{attractor} once more then gives the desired compactness to the space of almost periodic solutions.

\subsection{Acknowledgments}

The author is supported by a grant from the Macarthur Foundation and by NSF grant DMS-0649473.  The author thanks Michael Weinstein for posing this question.  The author also thanks the referee for corrections.

\subsection{Notation}

Throughout the remainder of this paper, $p,d,V$ are understood to be fixed and to obey the hypotheses above (i.e. that $d \geq 5$, $1+\frac{4}{d} < p < 1+\frac{4}{d-2}$, and $V \in C^\infty_0(\R^d)$).  All implied constants are allowed to depend on $p,d,V$.

\section{Reduction to a quasi-Liouville theorem}\label{quasisec}

The first step in the argument is to establish a local compact attractor:

\begin{theorem}[Local compact attractor]\label{local-thm} Let $0 < E < \infty$.  Then there exists a compact set $K_E \subset H$ of spherically symmetric functions, invariant under the flow \eqref{nls}, such that for every global solution $u \in C^0_t H^1_x(\R \times \R^d)$ which is spherically symmetric and which obeys the bound $\sup_t \|u(t)\|_H \leq E$, there exists a spherically symmetric scattering state $u_+ \in H$ such that
$$ \dist_H( u(t) - e^{it\Delta} u_+, K_E ) \to 0 \hbox{ as } t \to +\infty.$$
\end{theorem}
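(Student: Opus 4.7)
The plan is to construct $K_E$ as the orbit closure, in $H$, of the family $A_E$ of all spherically symmetric almost periodic solutions of \eqref{nls} whose energy-norm bound is at most some constant $C(E)$ depending on $E$ (where ``almost periodic'' means that the orbit $\{v(t):t\in\R\}$ is precompact in $H$). Flow-invariance of $K_E$ is then automatic, and the two substantive tasks are to verify that (i) $K_E$ is compact, and (ii) every bounded global spherically symmetric solution $u$ decouples into a linear radiation term $e^{it\Delta}u_+$ plus a residue that limits onto $K_E$.

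For (i), I would invoke a Bahouri--G\'erard / Keraani style linear profile decomposition adapted to spherically symmetric bounded sequences in $H$: any such $(f_n)$ has a subsequence admitting a decomposition
\[ f_n = \sum_{j=1}^J (\lambda_n^{(j)})^{-(d-2)/2}\,\phi^{(j)}\!\Bigl(\tfrac{\cdot}{\lambda_n^{(j)}}\Bigr) + r_n^{(J)}, \]
with asymptotically orthogonal scales $\lambda_n^{(j)}\in(0,\infty)$, spherically symmetric profiles $\phi^{(j)}\in H$, and remainders $r_n^{(J)}$ whose free Schr\"odinger evolutions become negligible in an appropriate Strichartz norm as $J\to\infty$. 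Because $V\in C^\infty_0$ and there are no zero-energy resonances in dimension $d\ge 5$ (see \cite{Jensen}), a free-linear decomposition is adequate. Applied to $v_n(0)$ with $v_n\in A_E$, the precompactness of each orbit $\{v_n(t)\}_t$ forces every scale $\lambda_n^{(j)}$ to remain in a fixed compact subset of $(0,\infty)$: a bubble at vanishing or diverging scale would peel off from the solution under forward evolution, violating precompactness. Hence only finitely many profiles survive, all at bounded scale, and after passing to a further subsequence $v_n(0)$ converges in $H$, so $K_E$ is precompact and its closure compact.

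For (ii), given $u$ as in the hypothesis and a sequence $t_n\to\infty$, I would apply the profile decomposition to $u(t_n)$, evolve each linear profile through the nonlinear flow \eqref{nls} to obtain a nonlinear profile, and then patch everything together via a long-time Strichartz perturbation lemma. Nonlinear profiles at vanishing scale see no potential and obey the free defocusing NLS, which scatters; profiles at diverging scale are essentially linear and disperse; both contribute only to the scattering state $u_+$, whose existence is extracted as the weak $H$-limit of $e^{-it\Delta}u(t)$ along the sequence. Nonlinear profiles at bounded scale either scatter (again contributing to $u_+$) or are globally non-scattering, in which case a Keraani-style minimality extraction upgrades them to almost periodic solutions lying in $A_E$. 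Consequently $u(t_n) - e^{it_n\Delta}u_+$ is within $o_H(1)$ of $K_E$, and since $t_n$ was arbitrary the convergence $\dist_H(u(t)-e^{it\Delta}u_+,K_E)\to 0$ follows. The main obstacle is the nonlinear profile and perturbation analysis at intermediate scales $\lambda_n^{(j)}\sim 1$, where $V$ interacts with the profile in a nontrivial way and one cannot exploit smallness, high-frequency dispersion, or low-frequency linearisation to absorb it; this is the content of the analogous step in \cite{attractor}, and the present argument reduces to verifying that it transfers to the spherically symmetric, compactly supported-potential setting.
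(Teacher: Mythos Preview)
Your approach differs substantially from the paper's. The paper does not argue via profile decomposition; it invokes \cite{attractor} essentially verbatim, checking only that the potential term $Vu$ can be absorbed into the estimates used there. The argument of \cite{attractor} first extracts $u_+$ as a weak limit, then shows \emph{directly}---by a double-Duhamel trick combined with local smoothing and dispersive estimates---that the residue $u(t)-e^{it\Delta}u_+$ is uniformly localised in frequency (Proposition~6.1 there) and in space (Theorem~7.1 there). Compactness of $K_E$ then follows from these uniform bounds via a Rellich-type embedding, without ever classifying the residue as a superposition of almost periodic pieces.

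Your route has a genuine gap in step (ii). The Keraani/Kenig--Merle mechanism produces an almost periodic solution only under a \emph{minimality} hypothesis: one posits a threshold below which scattering holds, and the profile extracted exactly at that threshold is then forced (by minimality) to have a single-bubble orbit at every time, which is what gives precompactness. Here there is no minimality in play. A nonlinear profile at bounded scale that fails to scatter is simply a non-scattering solution of \eqref{nls}; nothing in your outline upgrades it to an almost periodic one. One could attempt an induction on the energy bound---profiles carry strictly smaller mass and energy by asymptotic orthogonality, so the inductive hypothesis would apply to them---but you have not set this up, and the single-profile case (where the profile may carry the full energy of $u$) would still require separate treatment. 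There is a secondary gap in step (i): even granting that all scales $\lambda_n^{(j)}$ remain bounded, the remainder $r_n^{(J)}$ in a profile decomposition is small only in Strichartz-type norms, not in $H$, so strong convergence of $v_n(0)$ in $H$ does not follow; nor does ``bounded scales'' by itself force only finitely many profiles.
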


This result was already established in \cite[Theorem 1.12]{attractor}, when the potential $V$ was absent and the nonlinearity $|u|^{p-1} u$ was replaced by a more general nonlinearity $F(u)$ of $p^{th}$ power type.  It turns out that the presence of the lower order term $Vu$ in the equation \eqref{nls} makes essentially no difference to the arguments in \cite{attractor}, basically because $Vu$ obeys all\footnote{In the model case in which $V = G(Q)$ for a nonlinear bound state $Q$ to some NLS $-E Q + \Delta Q = F(Q)$ and some power type nonlinearities $F, G$ of order $p$ and $p-1$ respectively, one can in fact view \eqref{nls} as a vector-valued free nonlinear Schr\"odinger equation for the pair $(u, Q e^{iEt})$.  An inspection of the arguments in \cite{attractor} show that the fact that $u$ is scalar is never actually used in the paper, and so the results in \cite{attractor} extend without difficulty to this vector-valued setting, thus giving Theorem \ref{local-thm} in this case.  Since this model case already gives quite a large and ``generic'' class of potentials $V$ for Theorem \ref{local-thm}, this already gives a fairly convincing heuristic argument that Theorem \ref{local-thm} must hold in general.} the estimates that were required of $|u|^{p-1} u$.  Thus Theorem \ref{local-thm} can be established by repeating the arguments from \cite{attractor} \emph{mutatis mutandis}.  We will however provide a little more detail below concerning the (minor) changes in \cite{attractor} that have to be made to accomodate the potential.

Roughly speaking, the idea is to repeat the arguments in \cite{attractor} but with $F(u)$ replaced by $|u|^{p-1} u + Vu$ throughout.  The local bound $\|F(u)\|_{W^{1,R}_x(\R^d)} \lesssim \|u\|_H^p$ in \cite[Lemma 2.3]{attractor} holds as long as one adds the lower order term $\|u\|_H$ to the right-hand side (which turns out to be quite harmless).  The local Strichartz control in \cite[Lemma 4.3]{attractor} is then easily extended to the case of a potential by an application of H\"older's inequality.  The smoothing effect in \cite[Proposition 4.5]{attractor} for the nonlinearity $|u|^{p-1} u + Vu$ still holds, by exploiting the well-known Kato smoothing effect \cite{sj}, \cite{vega} to obtain additional regularity for $Vu$. In the proof of \cite[Lemma 5.6]{attractor}, the expression $F(v+e^{it\Delta}u_+)-F(v)$ that needs to be estimated there acquires an additional term of $V e^{it\Delta} u_+$, but this is easily seen to be manageable by H\"older's inequality.

With these estimates in hand, most of the rest of the argument in \cite{attractor} goes through with no changes, as the properties of the nonlinearity $F$ are only exploited through the above lemmas.  In particular, the preliminary decomposition of the solution in \cite[Proposition 5.2]{attractor} and the frequency localisation in \cite[Proposition 6.1]{attractor} remain true.  The preliminary spatial localisation in \cite[Theorem 7.1]{attractor} also can be established by repeating the arguments, with only one small modification: in addition to the concentration points $x_1(t),\ldots,x_J(t)$ identified in that proposition, one should add one additional concentration point at the origin.  This ensures that $V$ is small away from these concentration points, which allows the effect of $V$ to be neglected when applying a spatial cutoff $\chi$ away from these points.  (In any case, we will reprove this spatial localisation shortly.)  The arguments in \cite[Section 8]{attractor} then yield Theorem \ref{local-thm} with no changes (indeed, the nonlinearity is not even mentioned in that section).

\begin{remark} The above arguments did not use the defocusing nature of the nonlinearity, and would indeed hold for more general equations of the form $i u_t + \Delta u = F(u) + Vu$, where $F$ was as in \cite{attractor}.
\end{remark}

In view of Theorem \ref{local-thm}, the proof of Theorem \ref{main-thm} reduces to the following ``quasi-Liouville theorem'', which asserts that the space of almost periodic spherically symmetric solutions is compact:

\begin{theorem}[Quasi-Liouville theorem]\label{quasi-thm} Suppose that $d \geq 11$.  Then there exists a compact set $K \subset H$ such that any global solution $u \in C^0_t H^1_x(\R \times \R^d)$ which is spherically symmetric and which is \emph{almost periodic} in the sense that $\{ u(t): t \in \R \}$ is a precompact subset of $H$, must lie in $K$.
\end{theorem}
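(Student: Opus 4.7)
The plan is to show that every almost periodic spherically symmetric solution $u$ satisfies the universal \emph{a priori} bound $\sup_{t\in\R} \|u(t)\|_H \leq C(p,d,V)$; Theorem \ref{local-thm} applied with $E := C(p,d,V)$ then provides a compact set $K := K_E$ and a scattering state $u_+$ with $\dist_H(u(t)-e^{it\Delta}u_+,K_E)\to 0$. Almost periodicity forces $u_+=0$: precompactness of the orbit gives a subsequence along which $u(t_n)$ converges strongly in $H$, and the distance statement lets us pass to a further subsequence along which $e^{it_n\Delta}u_+$ also converges strongly in $H$; but $e^{it\Delta}u_+ \rightharpoonup 0$ weakly in $H$ while preserving $H$-norm, so its strong subsequential limit must be $0$ and thus $u_+=0$. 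Then $\dist_H(u(t),K_E)\to 0$ together with precompactness of the orbit yields $u(t)\in K_E$ for every $t$, and we may take $K:=K_E$.

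The first step, which I expect to be the main obstacle, is a \emph{universal spatial decay} estimate: for some $\sigma = \sigma(d) \geq \tfrac{3}{2}$ and some $C = C(p,d,V)$ (depending at worst sublinearly on $E := \sup_t\|u(t)\|_H$),
$$ \sup_{t\in\R} \|\langle x \rangle^\sigma u(t)\|_{H^1(\R^d)} \leq C.$$
I would establish this by the double Duhamel trick from \cite{attractor}: for an almost periodic solution $e^{i(t-s)\Delta}u(s)$ converges weakly to $0$ as $s\to\pm\infty$, so $u(t)$ is represented as a Duhamel integral over either $\{s<t\}$ or $\{s>t\}$. Pairing the two representations against a suitable weight reduces bounds on $u(t)$ to a double integral of nonlinearity--nonlinearity matrix elements, estimated via the dispersive decay $\|e^{i\tau\Delta}\|_{L^1\to L^\infty}\lesssim |\tau|^{-d/2}$; the $Vu$ contribution is treated by Kato smoothing exactly as in the proof of Theorem \ref{local-thm}. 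The assumption $d\geq 11$ enters here to make the dispersive rate strong enough that the double integral converges with weight exponent $\sigma \geq 3/2$, precisely the threshold demanded by the quartic virial weight below. Making the constant universal (rather than nonlinear in $E$) requires careful tracking of the $E$-dependence through the iteration.

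Given the decay, I would bound the mass using \eqref{virial} with the quartic weight $a(x):=|x|^4$. One computes $\operatorname{Hess}(a)(\nabla u,\overline{\nabla u}) = 4|x|^2|\nabla u|^2 + 8|x\cdot\nabla u|^2$, $\Delta a = (4d+8)|x|^2$, and $\Delta\Delta a = 8d(d+2)$, so \eqref{virial} becomes
\begin{equation*}
\partial_t V_2(t) = 8\int|x|^2|\nabla u|^2 + 16\int|x\cdot\nabla u|^2 + \tfrac{(p-1)(4d+8)}{p+1}\int|x|^2|u|^{p+1} - 4d(d+2)\,\M(u) - 4\int|x|^2(x\cdot\nabla V)|u|^2,
\end{equation*}
where $V_2(t):=\int 4|x|^2 x\cdot\Im(\bar u\nabla u)\,dx$. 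By Step 1, $V_2(t)$ and every non-conserved integrand above is bounded uniformly in $t$ by $C(p,d,V)$: the positive kinetic and nonlinear terms follow directly from weighted Sobolev estimates, and the potential term uses compact support of $\nabla V$ together with $\|u\|_{L^2(|x|\leq R_0)}\leq\|\langle x\rangle^\sigma u\|_2\leq C$. Since $\M(u)$ is conserved, integrating on $[0,T]$, dividing by $T$, and letting $T\to\infty$ kills the $V_2$ boundary contribution and yields the universal bound $\M(u)\leq C(p,d,V)$.

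Finally, for the energy I would apply \eqref{virial} with $a(x):=|x|^2$: now $\operatorname{Hess}(a)=2I$, $\Delta a=2d$, $\Delta\Delta a=0$, giving
\begin{equation*}
\partial_t V_1(t) = 4\|\nabla u\|_{L^2}^2 + \tfrac{2d(p-1)}{p+1}\|u\|_{L^{p+1}}^{p+1} - 2\int(x\cdot\nabla V)|u|^2,
\end{equation*}
with $V_1(t):=\int 2x\cdot\Im(\bar u\nabla u)\,dx$, again bounded by Step 1. The potential term is bounded by $C_V\M(u)\leq C$ by the previous step, so time-averaging produces $\frac{1}{T}\int_0^T(\|\nabla u(t)\|_{L^2}^2+\|u(t)\|_{L^{p+1}}^{p+1})\,dt \leq C$. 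Combined with conservation of $\E(u)$ (see \eqref{energy-def}) and $|\int V|u|^2|\leq\|V\|_\infty\M(u)\leq C$, this gives $\E(u)\leq C$; one final invocation of energy conservation yields the pointwise bound $\|\nabla u(t)\|_{L^2}^2\leq 2\E(u)+\|V\|_\infty\M(u)\leq C$, hence $\sup_t\|u(t)\|_H \leq C(p,d,V)$, and the reduction to Theorem \ref{local-thm} completes the proof.
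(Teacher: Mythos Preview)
Your overall architecture is right---spatial decay, then two virial identities, then reduce to Theorem~\ref{local-thm}---but Step~1 as stated is circular, and this propagates to the $|x|^4$ step.  You ask for a \emph{universal} bound $\|\langle x\rangle^\sigma u(t)\|_{H^1}\le C(p,d,V)$; but taking $\sigma=0$ this already says $\sup_t\|u(t)\|_H\le C$, which is precisely the conclusion you are trying to prove.  The double Duhamel argument in \cite{attractor} (and in Proposition~\ref{decay-prop} of the paper) only produces $E$-dependent decay $\int_{|x|\ge R}|u|^2\lesssim_E R^{4-d}$, and there is no mechanism in that iteration for the $E$-dependence to disappear.  Your parenthetical ``depending at worst sublinearly on $E$'' does not help: if the bound on $\int|x|^2|\nabla u|^2$ carries any $E$-dependence at all, then so does the mass bound you extract from the $|x|^4$ virial, and the argument never closes.

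The missing idea is Hardy's inequality.  In the $|x|^4$ virial you should not try to bound the positive kinetic term $\int|x|^2|u_r|^2$ from above; instead, Hardy gives $\int|x|^2|u_r|^2\ge\tfrac{d^2}{4}\int|u|^2$, so the combination $24\int|x|^2|u_r|^2-4d(d+2)\int|u|^2\ge 2d(d-4)\,\M(u)$ is a \emph{favourable} multiple of the mass.  Now the only quantities that need universal bounds are the potential term $\int(\nabla a\cdot\nabla V)|u|^2$ and the time-averaged boundary term $V_2$; the latter is $O_E(1)$ by the (merely $E$-dependent) decay and vanishes after dividing by $T$.  This also explains why the paper treats energy \emph{before} mass: the universal bound $\E(u)\lesssim 1$ from the $|x|^2$ virial (where the potential term is absorbed into $\int|u|^{p+1}$ by H\"older on a bounded set, costing only an additive $O(1)$) is exactly what is needed to make the potential term in the $|x|^4$ virial $O(T_2-T_1)$ with a universal constant.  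Finally, $d\ge 11$ enters not through the dispersive rate per se but because the boundary term $\int|x|^3|u||\nabla u|$ requires $\int|x|^6|u|^2<\infty$, which the decay $R^{4-d}$ delivers only when $d\ge 11$.
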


Indeed, to prove Theorem \ref{main-thm}, let $u$ be any global spherically symmetric solution to \eqref{nls} of finite energy. Then the conservation laws give $\sup_t \|u(t)\|_H \leq E$ for some finite $E$, and so by Theorem \ref{local-thm} we have a spherically symmetric scattering state $u_+$ such that $u - e^{it\Delta} u_+$ is attracted to an invariant compact set $K_E$ of spherically symmetric functions.  But every element of $K_E$ generates a solution which stays in $K_E$ (by invariance) and is thus almost periodic (by compactness), and so by Theorem \ref{quasi-thm} we thus have $K_E \subset K$ for some compact set $K$ independent of $E$, and Theorem \ref{main-thm} follows.

\begin{remark} In view of Theorem \ref{local-thm}, the soliton resolution conjecture can also be cast in an equivalent ``Liouville theorem'' form, asserting that the only almost periodic solutions are the soliton solutions.  See \cite{soffer-icm}, \cite{attractor} for some further discussion of this point.  The reduction of problems concerning general solutions to dispersive equations to that of establishing Liouville-type theorems for almost periodic solutions to such equations (or more generally, to solutions which are almost periodic modulo the symmetries of the equation) is now well established in the literature, by the work of Martel-Merle, Merle-Raphael, Kenig-Merle, and others; see e.g. \cite{tao-survey} for a survey.
\end{remark}

\section{Polynomial spatial decay}

From compactness, one easily sees that an almost periodic solution $u$ must exhibit uniform spatial decay in the sense that $\sup_t \int_{|x| \geq R} |u(t,x)|^2 + |\nabla u(t,x)|^2\ dx \to 0$ as $R \to \infty$ (see \cite[Proposition B.1]{attractor} for a proof).  In fact this decay can be made polynomial in $R$ (answering a question in \cite[Remark 1.21]{attractor} in the spherically symmetric case); establishing this will be the purpose of this section.  Such enhanced decay is necessary for us to apply the generalised virial identity \eqref{virial} with weights $a$ which grow fairly rapidly at infinity.

More precisely, we shall show

\begin{proposition}[Polynomial spatial decay]\label{decay-prop} Let $u$ be an almost periodic spherically symmetric global solution with 
\begin{equation}\label{supt}
\sup_t \|u(t)\|_H \leq E.
\end{equation}
Then for any $R \geq 1$ and $t \in \R$ we have
\begin{equation}\label{xr}
 \int_{|x| \geq R} |u(t,x)|^2\ dx \lesssim_E R^{4-d}.
\end{equation}
\end{proposition}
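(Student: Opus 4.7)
The strategy is to adapt the double-Duhamel trick of \cite{attractor}, which was used there to produce frequency regularity, in order to get polynomial spatial decay instead. The starting point is that almost periodicity of $u$, combined with the dispersive decay of $e^{it\Delta}$ on compact subsets of $H$, forces $e^{i(t-T)\Delta} u(T) \rightharpoonup 0$ weakly in $H$ as $T \to \pm\infty$ (a Riemann--Lebesgue-type argument). Passing to this weak limit in the usual Duhamel formula yields both the forward and backward representations
$$u(t) = -i \int_{-\infty}^{t} e^{i(t-s)\Delta} N(u(s))\,ds = +i \int_{t}^{+\infty} e^{i(t-s)\Delta} N(u(s))\,ds,$$
where $N(u) := |u|^{p-1}u + Vu$, interpreted when tested against any $\phi \in H$.

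\textbf{The double Duhamel identity.} Let $\chi_R$ be a smooth radial cutoff equal to $1$ on $\{|x| \ge R\}$ and supported in $\{|x| \ge R/2\}$. Computing $\int \chi_R^2 |u(t)|^2\,dx$ by inserting the forward representation into one factor of $u(t)$ and the backward representation into the conjugate factor (so that the linear contributions, already eliminated by the weak limits, do not reappear), one arrives at
$$\int_{\R^d} \chi_R^2\, |u(t)|^2\,dx \;=\; -\int_{-\infty}^{t}\! \int_{t}^{+\infty} \langle \chi_R^2\, e^{i(t-s_1)\Delta} N(u(s_1)),\, e^{i(t-s_2)\Delta} N(u(s_2)) \rangle\,ds_2\,ds_1.$$
The source $N(u)$ is essentially localized near the origin: $Vu$ is compactly supported in $\operatorname{supp}(V)$, while $|u|^{p-1}u$ inherits the qualitative uniform spatial decay of $u$ from almost periodicity.

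\textbf{Why the rate is $R^{4-d}$.} Formally changing variables $\tau = s_2 - s_1 > 0$ in the double time integral produces the operator $\int_0^\infty \tau\, e^{i\tau\Delta}\, d\tau$, whose Fourier symbol is $-|\xi|^{-4}$, i.e.\ it formally represents $-(-\Delta)^{-2}$. Thus, to leading order, the identity reads $\int \chi_R^2 |u|^2 \approx \| \chi_R (-\Delta)^{-1} N(u(t)) \|_{L^2}^2$; in words, $u$ should behave at spatial infinity like the Newtonian potential of the essentially compactly supported ``charge'' $N(u(t))$. Since the Green's kernel of $(-\Delta)^{-1}$ decays as $|x-y|^{-(d-2)}$, this predicts $|u(t,x)|^2 \lesssim_E |x|^{-2(d-2)}$ for $|x| \gtrsim 1$, and integrating over $\{|x| \ge R\}$ yields $\int_{|x|\ge R} |u|^2\,dx \lesssim_E R^{d-2(d-2)} = R^{4-d}$, exactly the bound claimed in \eqref{xr}.

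\textbf{Making it rigorous and the main obstacle.} Converting the heuristic into an estimate requires care since neither single time integral is absolutely convergent. The plan is to truncate both intervals to $[-T,t]$ and $[t,T]$, bound the truncated bilinear form using pointwise estimates on the kernel of $\chi_R\, e^{i\tau\Delta}$ applied to functions supported in $|y|\lesssim 1$ (dispersive bound $(4\pi|\tau|)^{-d/2}$ for small $|\tau|$, improved by integration by parts against the non-stationary phase $|x-y|^2/(4\tau)$ for large $|\tau|$), and then pass to the limit $T\to\infty$ using the qualitative decay of $N(u(s))$ at $|y|\gtrsim R/4$. A split $N(u) = N(u)\mathbf{1}_{|y|\le R/4} + N(u)\mathbf{1}_{|y|>R/4}$ isolates the Newtonian main term from a small tail, and a bootstrap in $R$ upgrades qualitative decay to the claimed polynomial rate. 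The chief technical obstacle is extracting the effective $(-\Delta)^{-2}$ behaviour of the double time integral quantitatively: a naive absolute-value estimate diverges, so one must exploit oscillation, either via a spacetime-Fourier argument concentrating $\widehat{N(u)}$ near the paraboloid $\omega = |\xi|^2$, or via careful oscillatory integration by parts in time. The restriction $d \ge 11$ ultimately serves to keep the spatial integrals featuring the $|x|^{-(d-2)}$ Coulomb weight, coupled with the nonlinearity $|u|^{p-1}u$ for $p$ in the admissible range $1+\tfrac{4}{d} < p < 1+\tfrac{4}{d-2}$, sufficiently convergent to close the bootstrap; in smaller dimensions the Coulomb tail is too slow, which is presumably why the proposition is stated only for high dimension.
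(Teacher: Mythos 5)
Your overall strategy matches the paper's proof quite closely: decompose $\chi_R u(t)$ via forward and backward Duhamel, use almost-periodicity together with a Riemann--Lebesgue argument to remove the free-evolution terms, split the source $N(u)$ into a piece supported near the origin and a far tail, estimate the resulting bilinear form via kernel bounds, and bootstrap in $R$. Your heuristic that the double time integral behaves like $(-\Delta)^{-2}$, so that $u$ should inherit the $|x|^{-(d-2)}$ Newtonian decay of $(-\Delta)^{-1}N(u)$ near infinity, correctly identifies why $R^{4-d}$ is the natural endpoint, and it is the same philosophy driving the paper's argument. Two concrete discrepancies are worth flagging.

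First, you attribute the $d\ge 11$ hypothesis to this proposition, suggesting that in lower dimensions the Newtonian tail is ``too slow'' to close the bootstrap. This is incorrect: the paper proves Proposition~\ref{decay-prop} for all $d\ge 5$, and explicitly notes that $d\ge 11$ is only needed later, for the virial/Hardy argument in Section~\ref{virial-sec} (specifically so that $|x|^6$ weights and the $|x|^4$ virial are controllable). Nothing in the proof of the decay proposition itself requires $d\ge 11$.

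Second, your diagnosis of the main technical obstacle is slightly off. You suggest that one must exploit oscillation by a spacetime-Fourier localization near the paraboloid or by integration by parts in time. The paper does neither. For the main term (both sources supported at radius $\lesssim R/10$, cutoff $\chi_R$ supported at radius $\sim R$), it computes the kernel of $e^{it_-\Delta}\chi_R^2 e^{it_+\Delta}$ and uses \emph{spatial} non-stationary phase: on the support of $\chi_R$ the gradient of the phase has size $\gg R^2/\min(|t_-|,|t_+|)$, so integration by parts in the spatial variable produces a kernel bound
$$|K_{t_-,t_+}(x,y)|\lesssim \frac{R^d}{|t_+|^{d/2}|t_-|^{d/2}}\min\Bigl(1,\bigl(\tfrac{\min(|t_-|,|t_+|)}{R^2}\bigr)^{100d}\Bigr),$$
which is absolutely integrable in $(t_-,t_+)$ for $d\ge 5$ and yields exactly $R^{4-d}$ — no temporal cancellation is needed. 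For the tail term (source at radius $\gtrsim R$), the crucial device is a spherically symmetric refinement of the dispersive inequality (averaging the free propagator over $S^{d-1}$ to gain $(|x||y|/|t|)^{-(d-1)/2}$), which beats the standard $|t|^{-d/2}$ bound precisely in the difficult regime $|t|\le R^2$ and is the place where spherical symmetry is used. You do not mention this estimate; without some such mechanism the far-source error term would not close the bootstrap, so this is a genuine gap in your outline rather than a cosmetic omission.
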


Note that this proposition works for all dimensions $d \geq 5$; the need for the stronger condition $d \geq 11$ only arises when applying the virial identity in the next section.

\begin{remark}  The exponent $R^{4-d}$ is natural. Indeed, consider a nonlinear bound state $Q$, thus
$-EQ + \Delta Q = |Q|^{p-1} Q + V Q$ for some $E$ (not the same as the $E$ in the proposition).  Then we can write $Q = - (-\Delta + E)^{-1} ( |Q|^{p-1} Q + V Q )$.  As is well known, the fundamental solution of $(-\Delta + E)^{-1}$ is bounded by $O( 1/|x|^{2-d} )$ uniformly in $E$, and so one expects $Q$ to decay like $O( 1/|x|^{2-d} )$ as well, which is consistent with \eqref{xr}.
\end{remark}

\begin{proof} Let $u$ be as in this proposition.  Let $R_0 \geq 1$ be the first radius greater than or equal to $1$ such that $V(x) = 0$ for all $x \geq R_0/100$; thus $R_0 \lesssim 1$.
For any $\alpha \geq 0$, let $P(\alpha)$ denote the assertion that
\begin{equation}\label{xr2}
\int_{|x| \geq R} |u(t,x)|^2\ dx \lesssim_{E,\alpha} R^{-\alpha}
\end{equation}
for all $R \geq R_0$ and $t \in \R$.  The claim $P(0)$ follows immediately from \eqref{supt}.  We need to show that $P(d-4)$ is true.  It will suffice to show that the implication
\begin{equation}\label{impl}
P(\alpha) \implies P( \min( d-4, \alpha + \delta ) )
\end{equation}
holds for all $\alpha \geq 0$, where $\delta > 0$ is a constant depending only on $p, d$, since this will imply $P(d-4)$ from $P(0)$ after finitely many iterations of \eqref{impl}.

It remains to prove \eqref{impl}.  We thus let $\alpha \geq 0$ be such that \eqref{xr2} holds.  We allow all implied constants to depend on $E,\alpha$.  Note (from \eqref{supt}) that \eqref{xr2} implies the variant
\begin{equation}\label{xr3}
\int_{|x| \geq R} |u(t,x)|^2\ dx \lesssim (1+R)^{-\alpha}
\end{equation}
for all $R \geq 0$ and $t \in \R$.

Fix a smooth cutoff function $\eta: \R^d \to [0,1]$ which equals $1$ when $|x| \geq 1$ and vanishes when $|x| \leq 1/2$, and let $\eta_R(x) := \eta(x/R)$ and $\chi_R := \eta_R - \eta_{2R}$.  To show $P( \min( d-4, \alpha +\delta )$, it will suffice by geometric series to show that
\begin{equation}\label{chir}
\| \chi_R u(t) \|_2^2 \lesssim R^{4-d} + R^{-\alpha - \delta}
\end{equation}
for all $t \in \R$ and $R \geq R_0$, and some $\delta > 0$ depending only on $p$ and $d$, where we abbreviate $\| \|_{L^2_x(\R^d)}$ as $\| \|_2$.

Fix $R$.  By time translation symmetry we may take $t=0$.  Let $T > R^2$ be a large time (which will eventually be sent to infinity).  From Duhamel's formula one has
\begin{equation}\label{chr}
\chi_R u(0) = u_\pm + v_\pm + w_\pm
\end{equation}
for either choice of sign $\pm$, where
\begin{align*}
u_\pm &:= \chi_R e^{\mp iT\Delta}u(\pm T) \\
v_\pm &:= i \chi_R \int_0^{\pm T} e^{-it\Delta} (1-\eta_{R/10}) F(u(t))\ dt\\
w_\pm &:= i \chi_R \int_0^{\pm T} e^{-it\Delta} \eta_{R/10} F(u(t))\ dt
\end{align*}
and $F(u) := |u|^{p-1} u + Vu$.  

From \eqref{chr} we can expand the left-hand side of \eqref{chir} for $t=0$ as
$$
\langle u_+ + v_+ + w_+, u_- + v_- + w_- \rangle_{L^2}$$
which after some application of Cauchy-Schwarz can be expressed as
$$ \langle v_+, v_- \rangle_{L^2} + O( ( \| u_+ \|_{2} + \| w_+ \|_{2} + \| u_- \|_{2} + \| w_- \|_{2} ) ( \| u_+ \|_{2} + \| w_+ \|_{2} + \| u_- \|_{2} + \| w_- \|_{2} + \| \chi_R u(0) \|_{2} ) ).$$
Using the elementary inequality $ab \leq \eps a^2 + \frac{1}{\eps} b^2$ to absorb the $\|\chi_R u(0)\|_2$ term into the left-hand side of \eqref{chir}, we thus conclude that
\begin{equation}\label{charge}
\| \chi_R u(0) \|_{2}^2 \lesssim |\langle v_+, v_- \rangle_{L^2}| + \|u_+\|_{2}^2 + \|w_+\|_{2}^2 + \|u_-\|_{2}^2 + \|w_-\|_{2}^2.
\end{equation}

We now estimate each of the terms on the right-hand side of \eqref{charge}.  From the almost periodic nature of $u$ and the Riemann-Lebesgue lemma for the free Schr\"odinger evolution (see \cite[Lemma B.5]{attractor}) we have
\begin{equation}\label{upp}
\lim_{T \to +\infty} \| u_+ \|_{2}^2 + \|u_-\|_{2}^2 = 0.
\end{equation}
Now we estimate $\|w_+\|_{2}$.  By the triangle inequality, we can estimate this by the sum of
\begin{equation}\label{ok}
\| \chi_R \int_{R^2}^{T} e^{-it\Delta} \eta_{R/10} F(u(t))\ dt \|_{2}
\end{equation}
and
\begin{equation}\label{ok2}
\| \int_0^{R^2} e^{-it\Delta} \eta_{R/10} F(u(t))\ dt \|_{2}.
\end{equation}
Let us first consider \eqref{ok}.  By duality, we can express \eqref{ok} as
$$ \int_{R^2}^T \langle \eta_{R/10} F(u(t)), e^{it\Delta} \chi_R f \rangle_{L^2}\ dt$$
for some $f$ with $\|f\|_{2}=1$.  Applying the standard dispersive inequality
\begin{equation}\label{eitf}
\| e^{it\Delta} f \|_{L^r_x(\R^d)} \lesssim |t|^{-d(\frac{1}{2}-\frac{1}{r})} \|f\|_{L^{r'}_x(\R^d)}
\end{equation}
for any $2 \leq r \leq \infty$ (where of course $r' := r/(r-1)$), together with H\"older's inequality, we can thus bound \eqref{ok} as
$$ \eqref{ok} \lesssim \int_{R^2}^T t^{-d(\frac{1}{2}-\frac{1}{r})}
\| \eta_{R/10} F(u(t)) \|_{L^{r'}_x(\R^d)} \| \chi_R f \|_{L^{r'}_x(\R^d)}\ dt.$$
By H\"older's inequality again we have $\| \chi_R f \|_{L^{r'}_x(\R^d)} \lesssim R^{d(\frac{1}{2}-\frac{1}{r})}$.  If we now choose $r$ so that $r' = 2/p$, then from \eqref{xr3} we have
\begin{equation}\label{etarf}
\| \eta_{R/10} F(u(t)) \|_{L^{r'}_x(\R^d)} \lesssim R^{-p\alpha/2} \lesssim R^{-\alpha/2}.
\end{equation}
The hypothesis $p > 1 + \frac{4}{d}$ implies that $d(\frac{1}{2}-\frac{1}{r}) > 2 \geq 1$, and so we conclude that
$$ \eqref{ok} \lesssim R^{-d(\frac{1}{2}-\frac{1}{r})+2} R^{-\alpha/2} $$
and thus the contribution of \eqref{ok} to \eqref{charge} is $O( R^{-\alpha - \delta} )$ for some $\delta > 0$, which is acceptable.  

Now we turn to \eqref{ok2}.  We square this expression to obtain
$$ \eqref{ok2}^2 = \int_0^{R^2} \int_0^{R^2} \langle e^{i(t'-t)\Delta} \eta_{R/10} F(u(t)), \eta_{R/10} F(u(t')) \rangle_{L^2}\ dt dt'.$$   
We could invoke the dispersive inequality \eqref{eitf} again to control this expression, but this turns out to give inferior results.  To get better results, we exploit the spherical symmetry.  In general, the fundamental solution of the free Schr\"odinger propagator gives that
$$ |\langle e^{it\Delta} f, g \rangle_{L^2}| = \frac{1}{(4\pi|t|)^{d/2}} 
\left|\int_{\R^d} \int_{\R^d} e^{i|x-y|^2/4t} f(x) \overline{g(y)}\ dx dy\right|$$
but when $f$ and $g$ are spherically symmetric, we can average over the angular variable and obtain
$$ |\langle e^{it\Delta} f, g \rangle_{L^2}| = \frac{1}{(4\pi|t|)^{d/2}} 
\left|\int_{\R^d} \int_{\R^d} (\int_{S^{d-1}} e^{i||x|\omega-|y||^2/4t}\ d\omega) f(x) \overline{g(y)}\ dx dy\right|.$$
A standard stationary phase computation reveals that
$$ \int_{S^{d-1}} e^{i||x|\omega-|y||^2/4t}\ d\omega \lesssim (|x| |y| / |t|)^{-(d-1)/2}$$
and so we have the spherically symmetric dispersive inequality
$$ |\langle e^{it\Delta} \eta_{R/10} f, \eta_{R/10} g \rangle_{L^2}| \lesssim \frac{1}{|t|^{1/2} R^{d-1}} \|f\|_{L^1_x(\R^d)} \|g\|_{L^1_x(\R^d)}$$
which improves over \eqref{eitf} when $|t| \leq R^2$. On the other hand, from Cauchy-Schwarz we have
$$ |\langle e^{it\Delta} \eta_{R/10} f, \eta_{R/10} g \rangle_{L^2}| \leq \|f\|_2 \|g\|_2$$
and hence by interpolation
$$ |\langle e^{it\Delta} \eta_{R/10} f, \eta_{R/10} g \rangle_{L^2}| \lesssim \frac{1}{|t|^{\frac{1}{2}-\frac{1}{r}} R^{(d-1)(1-\frac{2}{r})}} \|f\|_{L^{r'}_x(\R^d)} \|g\|_{L^{r'}_x(\R^d)}$$
for any $2 \leq r \leq \infty$.  Setting $r' := 2/p$ and using \eqref{etarf}, we thus have
$$ \eqref{ok2}^2 \lesssim \int_0^{R^2} \int_0^{R^2} \frac{1}{|t-t'|^{\frac{1}{2}-\frac{1}{r}} R^{(d-1)(1-\frac{2}{r})}} R^{-\alpha}\ dt dt'.$$
The right-hand side is $O( R^{-d(1-\frac{2}{r})+4} R^{-\alpha})$, and so the contribution of \eqref{ok2} to \eqref{charge} is also $O( R^{-\alpha - \delta} )$ for some $\delta > 0$.  Thus the net contribution of $w_+$ is acceptable.  By symmetry we see that the contribution of $w_-$ is also acceptable.

To finish the proof of \eqref{chir} and hence the proposition, it suffices to show that
$$ |\langle v_+, v_- \rangle_{L^2}| \lesssim R^{4-d} + R^{-\alpha - \delta}.$$
We expand the left-hand side as
$$
|\int_0^T \int_{-T}^0 \langle (1-\eta_{R/10}) e^{-i t_- \Delta} \chi_R^2 e^{-it_+ \Delta} (1-\eta_{R/10}) F(u(t_+)), F(u(t_-)) \rangle_{L^2}\ dt_+ dt_-|.$$
Now let us inspect the integral kernel $K_{t_-,t_+}(x,y)$ of $e^{i t_- \Delta} \chi_R^2 e^{it_+ \Delta}$, which (thanks to the fundamental solution for the free Schr\"odinger propagator) is given by the formula
$$ K_{t_-,t_+}(x,y) = \frac{C R^d}{|t_+|^{d/2} |t_-|^{d/2}} \int_{\R^d} e^{-i \Phi_{x,y}(z)} \chi^2(z)\ dz$$
for some absolute constant $C$, where the quadratic phase $\Phi_{x,y}(z)$ is given by the formula
$$ \Phi_{x,y}(z) = \frac{|y-Rz|^2}{4t_+} + \frac{|Rz-x|^2}{4t_-}.$$
Because of the cutoffs $1-\eta_{R/10}$, we are only interested in this kernel in the regime when $|x|, |y| \leq R/10$.  Meanwhile, we have $1/2 \leq |z| \leq 1$ on the support of $\chi$.  Since $t_-, t_+$ are positive, we conclude that the gradient
$$ \nabla_z \Phi_{x,y}(z) = R \frac{Rz-y}{4t_+} + R \frac{Rz-x}{4t_-}$$
has magnitude bounded away from zero by $\gg R^2 / \min(|t_-|,|t_+|)$.  We can thus integrate by parts repeatedly and obtain the bound
$$ |K_{t_-,t_+}(x,y)| \lesssim \frac{R^d}{|t_+|^{d/2} |t_-|^{d/2}} \min\left( 1, (\frac{\min(|t_-|,|t_+|)}{R^2})^{100d}\right).$$
We thus have
\begin{align*}
  |\langle v_+, v_- \rangle_{L^2}| 
&\lesssim \int_0^T \int_{-T}^0 \frac{R^d}{|t_+|^{d/2} |t_-|^{d/2}} \min\left( 1, (\frac{\min(|t_-|,|t_+|)}{R^2})^{100d}\right) \\
&\quad \| (1-\eta_{R/10}) F(u(t_+))\|_{L^1_x(\R^d)}
\| (1-\eta_{R/10}) F(u(t_-))\|_{L^1_x(\R^d)}\ dt_+ dt_-.
\end{align*}
A direct computation shows that
$$ \int_0^T \int_{-T}^0 \frac{R^d}{|t_+|^{d/2} |t_-|^{d/2}} \min\left( 1, (\frac{\min(|t_-|,|t_+|)}{R^2})^{100d}\right)\ dt_- dt_+  \lesssim R^{4-d}$$
and so it suffices to show that
$$ \| (1-\eta_{R/10}) F(u(t))\|_{L^1_x(\R^d)} \lesssim 1 + R^{(d-4-\alpha-\delta)/2}$$
for all $t$.

Fix $t$.  The contribution of $Vu$ to this expression is $O(1)$ thanks to \eqref{supt}, so it suffices to show that
$$ \int_{|x| \leq R/10} |u(t,x)|^p\ dx \lesssim 1 + R^{(d-4-\alpha-\delta)/2}.$$
If $p \geq 2$, then this follows from Sobolev embedding and \eqref{supt} (since $p < 1 + \frac{4}{d-2}$), so suppose that $p < 2$.  But then from \eqref{xr3} and H\"older's inequality we have
$$
\int_{|x| \sim R'} |u(t,x)|^p\ dx \lesssim (R')^{-\alpha p/2} (R')^{-d(\frac{p}{2}-1)}.$$
for any $R' \geq 1$.  Summing this over dyadic $R'$ between $1$ and $R$ (and using one last application of H\"older to treat the case $|x| = O(1)$) we conclude that
$$
\int_{|x| \leq R/10} |u(t,x)|^p\ dx \lesssim 1 + R^{-\alpha p/2} R^{-d(\frac{p}{2}-1)} \log R.$$
Since $p > 1+\frac{4}{d}$ and $\alpha p \geq \alpha$, the claim follows.  The proof of Proposition \ref{decay-prop} is now complete.
\end{proof}

\section{Virial inequalities}\label{virial-sec}

We now return to the proof of Theorem \ref{quasi-thm}.  Let $u$ be a spherically symmetric almost periodic global solution obeying the energy bound \eqref{supt} for some large $E$ (it will be important to track which bounds are uniform in $E$).  From \eqref{virial} and the fundamental theorem of calculus we see that
\begin{equation}\label{virial2}
\begin{split}
2 \int_{T_1}^{T_2} \int_{\R^d} \operatorname{Hess}(a)( \nabla u, \overline{\nabla u} )\ dx dt&\\
+ \frac{p-1}{p+1} \int_{T_1}^{T_2}\int_{\R^d} |u|^{p+1} \Delta a\ dx dt& \\
- \frac{1}{2} \int_{T_1}^{T_2} \int_{\R^d} |u|^2 \Delta \Delta a\ dx dt& \\
- \int_{T_1}^{T_2} \int_{\R^d} (\nabla a \cdot \nabla V) |u|^2\ dx dt& \lesssim \sup_{T_1 \leq t \leq T_2} \int_{\R^d} |u| |\nabla u| |\nabla a|\ dx
\end{split}
\end{equation}
for all times $-\infty < T_1 < T_2 < +\infty$ and all test functions $a \in C^\infty_0(\R^d)$.  From the bounded energy of $u$, the dominated convergence theorem, and a standard truncation argument, we see that this inequality also holds for any smooth $a$ with $\nabla a, \nabla^2 a, \nabla^3 a, \nabla^4 a$ uniformly bounded.

Suppose that we are in dimension $d \geq 7$.  Then from Proposition \ref{decay-prop} we have
$$ \int_{\R^d} |u|^2 |x|^2\ dx \lesssim_E 1$$
and thus by \eqref{supt} and Cauchy-Schwarz we have
$$ \int_{\R^d} |u| |\nabla u| |x|\ dx \lesssim_E 1.$$
If we formally apply \eqref{virial2} with $a(x) := |x|^2$, we obtain the virial inequality
\begin{equation}\label{virial3}
\begin{split}
4 \int_{T_1}^{T_2} \int_{\R^d} |\nabla u|^2\ dx dt&\\
+ 2d \frac{p-1}{p+1} \int_{T_1}^{T_2}\int_{\R^d} |u|^{p+1}\ dx dt& \\
- \int_{T_1}^{T_2} \int_{|x| \lesssim 1} O(|u|^2)\ dx dt& \lesssim_E 1.
\end{split}
\end{equation}
One can justify \eqref{virial3} rigorously as follows.  We let $R \gg 1$ be a large radius, and apply \eqref{virial2} with $a$ chosen to equal $|x|^2$ for $|x| \leq R$, vanishing for $|x| \geq 2R$, and smoothly interpolated in between.  The terms coming from the region $|x| \geq R$ to go to zero as $R \to \infty$ (keeping $T_1, T_2$ fixed) by \eqref{supt} and the dominated convergence theorem, yielding \eqref{virial3} by monotone convergence.  

Using conservation of energy \eqref{energy-def} and H\"older's inequality we can bound
$$
4 \int_{\R^d} |\nabla u|^2\ dx 
+ 2d \frac{p-1}{p+1} \int_{\R^d} |u|^{p+1}\ dx dt
- \int_{|x| \lesssim 1} O(|u|^2)\ dx dt \geq c \E(u) - O(1)$$
for some absolute constant $c>0$ depending only on $p$ and $d$.  From \eqref{virial3}, we conclude that
$$ \E(u) \lesssim 1 + O_E( \frac{1}{T_2-T_1} ).$$
Letting $T_2 - T_1 \to \infty$, we conclude that the energy $E(u)$ of the almost periodic solution is bounded uniformly in $E$:
\begin{equation}\label{energyu}
 \E(u) \lesssim 1.
\end{equation}

Having controlled the energy, we now turn to the mass.  Formally, the idea is to apply \eqref{virial2} with $a(x) := |x|^4$.  If we are in dimension $d \geq 11$, then from Proposition \ref{decay-prop} we have
\begin{equation}\label{u6}
 \int_{\R^d} |u|^2 |x|^6\ dx \lesssim_E 1
\end{equation}
and thus by \eqref{supt} (or \eqref{energyu}) and Cauchy-Schwarz we have
\begin{equation}\label{n3}
\int_{\R^d} |u| |\nabla u| |x|^3\ dx \lesssim_E 1.
\end{equation}
One can also compute
\begin{align*}
\operatorname{Hess}(a)( \nabla u, \overline{\nabla u} ) &= 12 |x|^{2}  |u_r|^2 + 4 |x|^{2} |\nabb u|^2 \\
\Delta a &= 4 (d+2) |x|^2 \\
\Delta \Delta a &= 8d (d+2)
\end{align*}
where $u_r$ is the radial derivative, and thus we formally have
\begin{equation}\label{virial4}
\begin{split}
\int_{T_1}^{T_2} \int_{\R^d} 24 |x|^2 |u_r|^2 + 8 |x|^2 |\nabb u|^2\ dx dt&\\
+ 4 (d+2) \frac{p-1}{p+1} \int_{T_1}^{T_2}\int_{\R^d} |u|^{p+1} |x|^2\ dx dt& \\
- 4d(d+2) \int_{T_1}^{T_2} \int_{\R^d} |u|^2\ dx dt& \\
- \int_{T_1}^{T_2} \int_{\R^d} (\nabla a \cdot \nabla V) |u|^2\ dx dt& \lesssim_E 1.
\end{split}
\end{equation}
To justify \eqref{virial4} rigorously, we take a large $R > 1$ and apply \eqref{virial2} with $a$ equal to $|x|^4$ for $|x| < R$, equal to $100 R^3 |x|$ for $|x| \geq 2R$, and smoothly interpolated in between in such a way that $a$ remains convex (so in particular $\hbox{Hess}(a)$ is positive semi-definite and $\Delta a$ is non-negative).   The terms coming from the region $R \leq |x| \leq 2R$ either goes to zero as $R \to \infty$ (thanks to \eqref{u6}), or are non-negative, and one can easily deduce \eqref{virial4} by monotone convergence.

Using \eqref{energyu} we can estimate
$$ \int_{T_1}^{T_2} \int_{\R^d} (\nabla a \cdot \nabla V) |u|^2\ dx dt = O( T_2 - T_1 ).$$
We discard the positive terms $|x|^2 |\nabb u|^2$ and $|u|^{p+1} |x|^2$ and end up with
\begin{equation}\label{ttt}
 \int_{T_1}^{T_2} \int_{\R^d} 24 |x|^2 |u_r|^2 - 4d(d+2) |u|^2\ dx dt \lesssim T_2 - T_1 + O_E(1).
\end{equation}
To deal with the negative term $4d(d+2) |u|^2$ we use

\begin{lemma}[Hardy's inequality]  Suppose that $f \in C^\infty_0(\R^d)$ vanishes near the origin, and let $\beta \in \R$.  Then
$$ \left(\frac{d+\beta}{2}\right)^2 \int_{\R^d} |f(x)|^2 |x|^\beta\ dx \leq \int_{\R^d} |f_r(x)|^2 |x|^{\beta+2}\ dx.$$
\end{lemma}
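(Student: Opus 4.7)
The plan is to prove this by integration by parts, using the fact that $f$ vanishes near the origin (so there are no boundary contributions from $|x|=0$) and that $f$ has compact support (so there are no contributions from infinity). The inequality is the standard weighted Hardy inequality and has a one-line derivation once one identifies the right divergence.

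The key observation is that the vector field $x \mapsto x |x|^{\beta}$ has divergence $(d+\beta)|x|^\beta$, since $\nabla \cdot x = d$ and $x \cdot \nabla |x|^\beta = \beta |x|^\beta$. Thus, writing $|f|^2 = f \overline{f}$ and integrating by parts,
$$(d+\beta) \int_{\R^d} |f(x)|^2 |x|^\beta\ dx = -\int_{\R^d} \nabla(|f|^2)(x) \cdot x\, |x|^\beta\ dx = -2 \int_{\R^d} |x|^{\beta+1} \Re(\overline{f(x)} f_r(x))\ dx,$$
where I have used that $\nabla(|f|^2) \cdot x = 2 \Re(\overline{f} \nabla f) \cdot x = 2 |x| \Re(\overline{f} f_r)$. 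The assumptions that $f$ is smooth, compactly supported, and vanishes near the origin guarantee that the integration by parts produces no boundary terms and that every integral in sight is absolutely convergent.

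Now I apply Cauchy--Schwarz to the right-hand side, splitting the weight $|x|^{\beta+1}$ as $|x|^{\beta/2} \cdot |x|^{(\beta+2)/2}$:
$$|d+\beta| \int_{\R^d} |f|^2 |x|^\beta\ dx \leq 2 \left(\int_{\R^d} |f|^2 |x|^\beta\ dx\right)^{1/2} \left(\int_{\R^d} |f_r|^2 |x|^{\beta+2}\ dx\right)^{1/2}.$$
Dividing through by the first factor on the right (which we may assume is nonzero, else the inequality is trivial) and squaring gives the claimed bound $\left(\frac{d+\beta}{2}\right)^2 \int |f|^2 |x|^\beta\ dx \leq \int |f_r|^2 |x|^{\beta+2}\ dx$. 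The case $d+\beta = 0$ is automatic.

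There is really no obstacle here; the only subtlety worth mentioning is that the assumption that $f$ vanishes near the origin is what allows one to use $|x|^{\beta+1}$ for negative $\beta$ without running into a singularity. In the intended application $\beta$ will be taken positive, so this assumption is not strictly needed, but stating the lemma this way makes the integration by parts completely clean.
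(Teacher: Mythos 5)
Your proof is correct and is essentially the paper's argument, just packaged differently: the paper completes the square by expanding $\int_{\R^d} \bigl||x| f_r + \tfrac{d+\beta}{2} f\bigr|^2 |x|^\beta\,dx \geq 0$ and integrating by parts, whereas you integrate by parts first and then apply Cauchy--Schwarz, but the two steps (the same integration-by-parts identity plus positivity of a quadratic form) are identical in substance.
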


\begin{proof} Start with the trivial inequality
$$ \int_{\R^d} \left||x| f_r(x) + \frac{d+\beta}{2} f(x)\right|^2 |x|^\beta\ dx \geq 0$$
and rearrange the left-hand side by integration by parts.
\end{proof}

Applying this with $\beta = 0$ and $f$ equal to a smoothly truncated version of $u$ (both near zero and near infinity) and applying a limiting argument (using \eqref{u6} and \eqref{supt} to control errors), we conclude that
$$ \frac{d^2}{4} \int_{\R^d} |u|^2\ dx \leq \int_{\R^d} |u_r|^2 |x|^2\ dx.$$
Multiplying this by $24$ and inserting it into \eqref{ttt}, we conclude that
$$ 2d(d-4) \int_{T_1}^{T_2} \int_{\R^d} |u|^2\ dx dt \lesssim T_2 - T_1 + O_E(1).$$
Note that as $d \geq 5$, the constant on the left-hand side is positive\footnote{More generally, it turns out that one can always use Hardy's inequality to obtain a favourable sign in this manner in dimensions $d \geq 5$ when selecting any weight of the form $a(x) = |x|^{\alpha}$ for some $\alpha \geq 1$.  Of course, for large $\alpha$ one still needs to establish sufficiently strong spatial decay of $u$ and/or $\nabla u$ in order to ensure that the right-hand side of \eqref{virial2} remains finite and to rigorously justify the use of this non-compactly supported weight.}.
Using conservation of mass \eqref{mass-def} and letting $T_2 - T_1 \to \infty$ as before we conclude that
\begin{equation}\label{massu}
 \M(u) \lesssim 1;
 \end{equation}
combining this with \eqref{energyu} we conclude that
\begin{equation}\label{outh}
 \sup_t \|u(t)\|_H \lesssim 1.
\end{equation}
Invoking Proposition \ref{decay-prop} again we conclude that
\begin{equation}\label{xdecay}
 \sup_t \int_{|x| \geq R} |u(t,x)|^2\ dx \lesssim R^{4-d}
\end{equation}
for all $R \geq 1$.

Finally, the arguments used to prove\footnote{As already noted in Section \ref{quasisec}, the addition of the potential energy term $Vu$ does not impact the proof of this proposition.  Also, the proposition as stated only controls $u(t)$ for sufficiently large times $t$, in order to obtain decay of the linear solutions $e^{it\Delta} u_0$, $e^{it\Delta} u_+$, but for almost periodic solutions, the contribution of the linear solutions can always be neglected using Riemann-Lebesgue type lemmas, and so the estimates for almost periodic solutions are valid for all times.} \cite[Proposition 6.1]{attractor}, when combined with \eqref{outh}, show that
$$ \sup_t \| u(t) \|_{H^{1+\eta}_x(\R^d)} \lesssim 1$$
for some $\eta > 0$ depending only on $p,d$.  Combining this with \eqref{xdecay} we see that $u(t)$ lies inside a compact subset $K$ of $H$ that depends only on $p,d,V$ (cf. \cite[Proposition B.1]{attractor}).  The proof of Theorem \ref{quasi-thm} and thus Theorem \ref{main-thm} is thus complete.

\section{Remarks and possible generalisations}\label{remarks-sec}

The hypothesis $d \geq 11$ can be improved with some additional work.  First of all, one can exploit the fact that the expression $\int_{\R^d} \nabla a \cdot \Im( \overline{u} \nabla u )\ dx$ appearing in \eqref{virial} is itself a derivative of a usable expression:
$$ \int_{\R^d} \nabla a \cdot \Im( \overline{u} \nabla u )\ dx = \frac{d}{dt} \frac{1}{2} \int_{\R^d} a |u|^2\ dx.$$
Thus to bound the left-hand side on average in time, it suffices to control $\int_{\R^d} a |u|^2\ dx$ uniformly in time.  Because of this, we can weaken \eqref{u6} to
$$  \int_{\R^d} |u|^2 |x|^4\ dx \lesssim_E 1$$
and still continue the rest of the proof.  This lets us relax the condition $d \geq 11$ to $d \geq 9$.

One can do even better by establishing an analogue of the decay result in Proposition \ref{decay-prop} for the derivative $\nabla u$.  Indeed, by repeating the proof of that proposition (but using the nonlinearity $\nabla F(u)$ rather than $F(u)$) one should eventually establish the bound
$$  \int_{|x| \geq R} |\nabla u(t,x)|^2\ dx \lesssim_E R^{2-d}$$
for $R \geq 1$. Note that the bound for $\nabla u$ is actually better than that for $u$; one expects $\nabla u$ to decay one order of magnitude faster than $u$ (as one can already heuristically see by looking at the ground state equation $Q = - (-\Delta+E)^{-1} ( VQ + |Q|^{p-1} Q )$ and considering the regularity of the resolvent kernel $(-\Delta+E)^{-1}$). See also \cite{tao:compact} for another instance of this phenomeon.  This allows one to establish \eqref{n3} for all $d \geq 7$, and should also let one extend Theorem \ref{main-thm} to this case.

The additional truncations to the virial identity in \cite[Sections 9,10]{tao:compact} should also allow one to derive \eqref{energyu} for all $d \geq 5$ (and perhaps even $d \geq 3$), but the author was not able to adapt the same argument to prove \eqref{massu} for $d=5$ or $d=6$.  Nevertheless the author believes that Theorem \ref{quasi-thm} (and thus Theorem \ref{main-thm}) should hold for all $d \geq 5$.

The requirement that $V$ be compactly supported can easily be relaxed to some polynomial decay rate on $V$ and $\nabla V$, though we have not attempted to compute the optimal such rate\footnote{A back-of-the-envelope computation suggests that one needs $|V(x)| \leq c |x|^{-2}$ and $|\nabla V(x)| \leq c |x|^{-3}$ for some small absolute constant $c>0$ and all sufficiently large $x$.}.  Note that some regularity on $V$ is required in order to keep the equation \eqref{nls} well-posed in the energy class $H$.

Our arguments rely at several key junctures on spherical symmetry.  In the absence of spherical symmetry, the problem is now translation invariant, and one must modify the notion of a compact attractor to take this into account; see \cite{attractor}.  Nevertheless, since the potential $V$ can only counteract the defocusing nonlinearity near the origin, it is reasonable to expect that some counterpart of Theorem \ref{quasi-thm} and Theorem \ref{main-thm} holds in this setting, at least in sufficiently high dimension.  One possible initial step in this direction would be to remove the assumption of spherical symmetry from Proposition \ref{decay-prop}.  One may also wish to apply interaction virial estimates (as in \cite{ckstt:scatter}) in this case.

One might also wish to consider a model in which the attraction is caused by a locally focusing nonlinearity rather than by a potential term.  For instance, one could consider the equation $iu_t + \Delta u = F(u)$ where $F$ is a Hamiltonian nonlinearity which behaves like the defocusing nonlinearity $|u|^{p-1} u$ for large $u$ but is allowed to be negative for small $u$.  It is certainly possible for such models to admit non-trivial nonlinear bound states.  However it is not clear to the author whether the analogue of Theorem \ref{quasi-thm} or Theorem \ref{main-thm} holds in this setting, even in extremely high dimension.  It is not even clear that the space of spherically symmetric nonlinear bound states is bounded in the energy space.  One possible obstruction arises from the fact that one can build partly bound states by starting with a nonlinear bound state in a lower dimension and extending it trivially to higher dimensions.  Such bound states have infinite energy, but one could imagine that some truncation or perturbation of this partly bound state would be stable, leading to nonlinear bound states or other soliton-like solutions to this equation of arbitrarily large but finite mass and energy.  As a variant of this scenario, one could consider spherically symmetric solutions concentrated around an annulus $\{ x: |x| = R + O(1) \}$ for some large $R$; in polar coordinates, such solutions resemble a one-dimensional nonlinear bound state, and by varying the parameter $R$ this could conceivably create a family of nonlinear bound states or similar solutions of arbitrarily large mass and energy.  On the other hand, it may well be possible to show that the $L^\infty$ norm of almost periodic solutions to such equations are necessarily bounded by some absolute constant depending only on the dimension and the nonlinearity (here it may be convenient to add the additional assumption that the nonlinearity $F$ is smooth, as this should force the almost periodic solution to be smooth also).

\end{document}